\title{Resolvent estimates and numerical implementation for the homogenisation of one-dimensional periodic mixed type problems}
\author{Sebastian Franz\footnote{
          Institute of Scientific Computing, Technische Universit\"at Dresden,
          01062 Dresden, Germany.
          \mbox{e-mail}: sebastian.franz@tu-dresden.de}\ \ and 
        Marcus Waurick\footnote{
          Dept. of Mathematics and Statistics,
          University of Strathclyde,
          Glasgow, UK.
          \mbox{e-mail}: marcus.waurick@strath.ac.uk}
        }
\date{\today}
\let\my@saved@original@eqref\eqref 
\renewcommand*{\eqref}[1]{
  \begingroup
    \let\normalfont\relax
    \my@saved@original@eqref{#1}
  \endgroup
}
\newcommand{\e}{\mathrm{e}}
\newcommand{\jump}[1]{[\hspace*{-2pt}[#1]\hspace*{-2pt}]}
\newcommand{\eps}{\varepsilon}
\newcommand{\scp}[1]{\langle #1 \rangle_H}
\newcommand{\N}{\mathbb{N}}
\newcommand{\R}{\mathbb{R}}
\newcommand{\U}{\mathcal{U}}
\newcommand{\PS}{\mathcal{P}}
\newcounter{tmp}
\newcommand{\makeballnumber}[1]{\setcounter{tmp}{\theenumi}%
\setcounter{enumi}{#1}%
\leavevmode \csname beamer@@tmpl@enumerate item\endcsname%
\setcounter{enumi}{\thetmp}}
\newcommand{\makeball}{\leavevmode \csname beamer@@tmpl@itemize item\endcsname}
\renewcommand{\Re}{\mbox{Re}}
\definecolor{seb}{rgb}{0.9,0,0}
\pgfplotsset{compat=1.9}
\tikzset{external/system call={pdflatex \tikzexternalcheckshellescape -interaction=batchmode -jobname "\image" "\texsource";
convert -density 600 -transparent white "\image.pdf" "\image.png"}}
\renewcommand{\phi}{\varphi}
\DeclareMathAlphabet{\mathcal}{OMS}{cmsy}{m}{n}
\theoremstyle{definition}
\newtheorem*{definition}{Definition}
\theoremstyle{plain}
\newtheorem{thm}{Theorem}[section]
\newtheorem{proposition}[thm]{Proposition}
\newtheorem{cor}[thm]{Corollary}
\newtheorem{rem}[thm]{Remark}
\newcommand{\scprm}[1]{\langle #1 \rangle_{\rho,m}}
\newcommand{\tmi}{t_{m,i}}
\newcommand{\Qmr}[1]{Q_m\left[#1\right]_\rho}
\DeclareMathOperator{\rge}{ran}
\DeclareMathOperator{\diag}{diag}
\begin{document}
 \pagestyle{fancy}
  \maketitle
  \begin{abstract}
    We study a homogenisation problem for problems of mixed type in the 
    framework of evolutionary equations. The change of type is highly oscillatory. 
    The numerical treatment is done by a discontinuous Galerkin method 
    in time and a continuous Galerkin method in space.
  \end{abstract}

  \textit{AMS subject classification (2000):} 35M10, 35B35, 35B27, 65M12, 65M60

  \textit{Key words:} evolutionary equations, homogenisation, numerical approximation

 \section{Introduction}
  A standard problem in engineering is the approximation of highly oscillatory coefficients by averaged ones. In fact, given a partial differential equation with variable coefficients, numerical procedures might be too involved for nowadays computing devices so that an effective model is often derived. The process of seeking effective coefficients as replacements for highly oscillatory ones is summarised under the umbrella term of homogenisation. The mathematical theory of homogenisation goes back to the late 1960s. We refer to the standard references \cite{Bensoussan1978,Cioranescu1999} for a more detailed account. 
  
  Standard applications of homogenisation are elliptic, parabolic or hyperbolic divergence form equations. Only quite recently, \cite{W16_SH} it has been noticed that for certain problems of mixed type, that is, differential equations changing their type from hyperbolic to parabolic to elliptic on different spatial domains in a highly oscillatory way, one can derive an effective model, which does not change its type anymore and consists of constant coefficients. 
  
  In \cite{W16_SH}, only a qualitative convergence statement was derived. The techniques developed in \cite{CW17_1D,CW17_FH}, however, suggest that the rate of convergence can be quantified. It is one main result of the present exposition -- based on the rationale outlined in \cite{CW17_1D,CW17_FH} -- that a quantified convergence rate for problems of the type discussed in \cite{W16_SH} can be derived. We refer to Section \ref{sec:analysis} for the precise equations.
  
  Given the low dimensionality of the problem to be discussed in this paper, we will furthermore numerically study the partial differential equation with highly oscillatory coefficients and provide a quantitative convergence statement that for highly oscillatory coefficients the corresponding numerical solution approximates the true solution of the homogenised model. In fact, the results in \cite{FrTW16} show that for mixed type equations one can derive a numerical scheme. It consists of a discontinuous Galerkin method in time, see e.g. \cite{RH73,Cockburn2000,Riviere08},  combined with a continuous Galerkin method in space. The framework developed in \cite{FrTW16} for a slightly different setting can be extended to our present problem easily and approximation properties proved therein can be transferred. 
  
  In Section \ref{sec:analysis}, we introduce the model under consideration and provide the desired convergence statement. In Section \ref{sec:numerics} we recall the numerical scheme derived in \cite{FrTW16} and provide the estimate that the numerical solution of the equation with highly oscillatory coefficients approximates the solution of the effective equation in a certain controlled way. We conclude this paper with a short numerical example in Section \ref{sec:simulation}.
     
  \section{Resolvent estimates for the continuous in-time homogenisation problem}\label{sec:analysis}
  
  In \cite{FrTW16}, we have already established the well-posedness of the Galerkin approximations and convergence to the original problem. What we aim to establish here is in spirit similar to the approach developed in \cite{CW17_1D,CW17_FH}. The main ingredients for this one-dimensional situation can readily be found in \cite{CW17_1D}. The main difference between the cases treated in \cite{CW17_1D} or \cite{CW17_FH} is the underlying spatial domain. In fact, the cited work focused on $\mathbb{R}$ and $\mathbb{R}^d$ as underlying spatial domain. In the present case, we treat the unit interval, instead. More precisely, using the formulation in \cite{W16_SH}, one can write the problem in question as the following $2\times 2$-block operator matrix system:
  \begin{equation}\label{eq:Lu}
      \left(\partial_t M_0(N\cdot) + M_1(N\cdot)+\begin{pmatrix}0&\partial_\# \\ \partial_\# &0\end{pmatrix}\right) U_N= F,
  \end{equation}
  where $\partial_\#$ is the weak derivative on $(0,1)$ with periodic boundary conditions, $M_0,M_1$ are $1$-periodic, measurable bounded $\mathbb{C}^{2\times 2}$-valued functions with the additional property that $M_0(x)=M_0(x)^\ast\geq 0$ and that there exists $\rho>0$ and $c>0$ such that
  \[
      \rho \langle M_0(x)\xi,\xi\rangle_{\mathbb{C}^2}+ \Re  \langle M_1(x)\xi,\xi\rangle_{\mathbb{C}^2}\geq c\langle \xi,\xi\rangle_{\mathbb{C}^2}.
  \]    
  As the latter equation is formulated on $(0,1)$, the continuous Gelfand transformation used in \cite{CW17_1D} to divide the problem on the whole space has to replaced by its discrete analogue. In the next two subsections, we will derive an estimate for the static case, which will eventually be applied to the dynamic case by going into the frequency domain. 
  \subsection{The static case}
    We start out with the discrete analogue of the Gelfand transformation as introduced in \cite{CW17_FH}.
    \begin{definition} Let $N\in\mathbb{N}$, $f\colon \mathbb{R}\to\mathbb{C}$. Then we define
    \[
       \mathcal{V}_N f(\theta,y)\coloneqq \frac{1}{\sqrt{N}}\sum_{k=0}^{N-1}f(y+k)e^{-i\theta k} \quad (y\in [0,1),\theta\in \{2\pi k/N; k\in\{0,\ldots, N-1\}).
    \]     
    \end{definition}
    \begin{proposition}
      The operator $\mathcal{V}_N\colon L^2_\#(0,N)\to L^2(0,1)^N$ given by
      \[
          f\mapsto \left(\mathcal{V}_N f(2\pi (k-1)/N,\cdot)\right)_{k\in\{1,\ldots,N\}}
      \]      
      is unitary, where $L^2_\#(0,N)\coloneqq \{f\in L^2_\textnormal{loc}(\mathbb{R}); f(\cdot+Nk)=f\quad (k\in\mathbb{Z})\}$ endowed with the norm of $L^2(0,N)$.
    \end{proposition}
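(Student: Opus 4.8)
The plan is to reduce the assertion to the unitarity of the normalised discrete Fourier transform on $\mathbb{C}^N$ by working fibrewise in the variable $y$. First I would introduce the natural identification of $L^2_\#(0,N)$ with $L^2\big((0,1);\mathbb{C}^N\big)$. Since every $f\in L^2_\#(0,N)$ is $N$-periodic, it is determined by its restriction to $(0,N)$, and splitting that interval into the unit cells $(k,k+1)$ yields
\[
   \|f\|_{L^2(0,N)}^2=\sum_{k=0}^{N-1}\int_0^1|f(y+k)|^2\dy=\int_0^1\sum_{k=0}^{N-1}|f(y+k)|^2\dy .
\]
Hence $f\mapsto\big(f(\cdot+k)\big)_{k=0}^{N-1}$ is a unitary map from $L^2_\#(0,N)$ onto $L^2\big((0,1);\mathbb{C}^N\big)$, and it remains to understand $\mathcal{V}_N$ under this identification, where I write $L^2(0,1)^N\cong L^2\big((0,1);\mathbb{C}^N\big)$.

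Second, writing $\theta_j=2\pi j/N$, I would observe that for fixed $y$ the assignment $\big(f(y+k)\big)_{k=0}^{N-1}\mapsto\big(\mathcal{V}_N f(\theta_j,y)\big)_{j=0}^{N-1}$ is exactly multiplication by the matrix $W\in\mathbb{C}^{N\times N}$ with entries $W_{jk}=N^{-1/2}e^{-i\theta_j k}$, since
\[
   \mathcal{V}_N f(\theta_j,y)=\frac{1}{\sqrt N}\sum_{k=0}^{N-1}f(y+k)e^{-i\theta_j k}=\sum_{k=0}^{N-1}W_{jk}\,f(y+k).
\]
The orthogonality relations for the $N$-th roots of unity, $\sum_{k=0}^{N-1}e^{i(\theta_l-\theta_j)k}=N\delta_{jl}$, give $W^\ast W=WW^\ast=I_N$, so $W$ is unitary on $\mathbb{C}^N$.

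Third, I would combine the two observations: under the identification of the first paragraph, $\mathcal{V}_N$ is precisely the composition of the unitary map $f\mapsto\big(f(\cdot+k)\big)_k$ with the fibrewise application of the unitary matrix $W$, and a composition of unitaries is unitary. Concretely, applying $W$ fibrewise and using that it preserves the Euclidean norm of $\mathbb{C}^N$ for almost every $y$ gives
\[
   \sum_{j=0}^{N-1}|\mathcal{V}_N f(\theta_j,y)|^2=\sum_{k=0}^{N-1}|f(y+k)|^2\qquad\text{for a.e. }y\in(0,1),
\]
which after integration over $(0,1)$ and the first display yields $\|\mathcal{V}_N f\|_{L^2(0,1)^N}^2=\|f\|_{L^2(0,N)}^2$; the reindexing $\theta_{k-1}$ by $k\in\{1,\dots,N\}$ matches the ordering in the statement. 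Surjectivity follows from the fibrewise inverse $W^{-1}=W^\ast$: given $(g_k)_{k=1}^N\in L^2(0,1)^N$, the function defined on $(0,1)$ by $f(y+k)=N^{-1/2}\sum_{j=0}^{N-1}g_{j+1}(y)e^{i\theta_j k}$ and extended $N$-periodically lies in $L^2_\#(0,N)$ and satisfies $\mathcal{V}_N f=(g_k)_k$.

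The computations are routine; the only point requiring a little care is the measure-theoretic bookkeeping of the first step — verifying that $f\mapsto\big(f(\cdot+k)\big)_k$ is a genuine measurable, norm-preserving identification and that the fibrewise norm identity holds for almost every $y$ — but this is immediate from $N$-periodicity together with Fubini--Tonelli. I do not expect a genuine obstacle here: the essential content is the unitarity of the discrete Fourier transform, and everything else is the transcription of that finite-dimensional fact into the $L^2$ setting.
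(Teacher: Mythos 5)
Your proof is correct, and while it rests on the same two ingredients as the paper's (splitting $(0,N)$ into unit cells and the orthogonality of the $N$th roots of unity), it is organized quite differently. The paper argues via the standard operator-theoretic template: it computes $N\|\mathcal{V}_N f\|^2_{L^2(0,1)^N}$ directly for continuous $N$-periodic $f$, expands the double sum, and reduces isometry to the relation $\sum_{\ell=0}^{N-1}e^{-i\theta_\ell(k_1-k_2)}=0$ for $k_1\neq k_2$, which it proves from scratch by a group-theoretic detour (fundamental theorem on homomorphisms, cyclicity of $\mathbb{Z}_N$, and the vanishing of the sum of the roots of $z^k-1$ via Vieta); it then establishes unitarity by a separate dense-range argument, exhibiting preimages of tuples $\phi\in C_c(0,1)^N$. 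You instead factor $\mathcal{V}_N$ through the unitary identification $L^2_\#(0,N)\cong L^2\big((0,1);\mathbb{C}^N\big)$ and recognize it as fibrewise multiplication by the DFT matrix $W$, taking the roots-of-unity orthogonality as known (legitimate, since it is just the geometric sum $\sum_{k}z^k=(z^N-1)/(z-1)=0$ for $z^N=1$, $z\neq 1$). What your route buys: isometry and surjectivity come simultaneously from invertibility of $W$, and your explicit inverse $f(y+k)=N^{-1/2}\sum_j g_{j+1}(y)e^{i\theta_j k}$ is exactly the inverse DFT; this is notably cleaner than the paper's dense-range step, whose claimed preimage ($f(x)=e^{i2\pi k/N}\phi_{k+1}(x)$ on $[k,k+1)$ with $N\mathcal{V}_N f=\phi$) does not verify as written and appears to be a garbled version of precisely your formula. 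What the paper's route buys: a fully self-contained proof of the orthogonality identity, and the isometry-plus-dense-range scheme, which is the pattern that generalizes to situations where no explicit inverse is available.
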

    \begin{proof}
     Let $f\colon \mathbb{R}\to \mathbb{C}$ be bounded, continuous with $ f(\cdot+Nk)=f$ for all $k\in\mathbb{Z}$. Then, we compute with $\theta_\ell=2\pi \ell/N$
     \begin{align*}
        N\|\mathcal{V}_N f\|^2_{L^2(0,1)^N} &= \sum_{\ell=0}^{N-1} \|\mathcal{V}_N f(2\pi \ell/N,\cdot)\|^2_{L^2(0,1)}
        \\ & = \sum_{\ell=0}^{N-1}\sum_{k_1=0}^{N-1}\sum_{k_2=0}^{N-1} e^{-i\theta_{\ell}(k_1-k_2)}\int_{(0,1)} f(y+k_1)\overline{f(y+k_2)} dy.
     \end{align*}
     We shall argue next that for all $k_1,k_2\in\{0,\ldots,N-1\}$ with $k_1\neq k_2$, we have
     \begin{equation}\label{eq:1a}
         \sum_{\ell=0}^{N-1} e^{-i\theta_{\ell}(k_1-k_2)} = 0. 
     \end{equation}
     For this, denote $n\coloneqq k_1-k_2\neq 0$ and consider the homomorphism
     \begin{align*}
        \phi \colon \mathbb{Z}_{N} &\to G\coloneqq \{ e^{-i\frac{2\pi n}{N}\ell} ; \ell\in\{0,\ldots,N-1\}\}
        \\                        \ell&\mapsto e^{-i\frac{2\pi n}{N}\ell}.
     \end{align*}     
     By the fundamental theorem on homomorphisms, $G=\rge(\phi)\cong \mathbb{Z}_N/\ker(\phi)$. In particular, $|G|$ divides $N$. Furthermore, since $\mathbb{Z}_N$ is cyclic, we obtain that $\mathbb{Z}_N/\ker(\phi)$ is cyclic and thus $G$ is cyclic. Let $z_\ast\in G$ generate $G$. Thus, $G=\{z_\ast^{0},\ldots,z_\ast^{k-1}\}$ are the $k$ unique, distinct $k$th unit roots. In particular, we obtain for all $z\in\mathbb{C}$
     \[
        z^k-1 = (z-z^{0}_\ast)\cdot \ldots (z-z^{k-1}_\ast).
     \]
     Expanding the right-hand side and comparing the coefficient of $z^{k-1}$ of both sides, we deduce that 
     \[
         \sum_{z\in G} z = 0.
     \]
     Hence, 
     \[
         \sum_{\ell=0}^{N-1} e^{-i\theta_{\ell}n}=\sum_{\ell=0}^{N-1} \phi(\ell)= \frac{N}{|G|} \sum_{z\in G} z = 0,
     \]
     which settles \eqref{eq:1a}. Therefore, we obtain
     \[
        N\|\mathcal{V}_N f\|^2_{L^2(0,1)^N}=\sum_{\ell=0}^{N-1}\sum_{k_1=0}^{N-1} \int_{(0,1)} f(y+k_1)\overline{f(y+k_1)} dy = N \|f\|_{L^2(0,N)}^2.
     \]     
     Moreover, note that for $\phi\in C_c(0,1)^N$, we have that the $N$-periodic extension of $f$ given by $f(x)=e^{i2\pi k/N} \phi_{k+1}(x)$ for $x\in [k,k+1)$ with $k\in\{0,\ldots,N-1\}$ leads to $N\mathcal{V}_N f=\phi$. Hence, $\mathcal{V}_N$ has dense range. Thus, $\mathcal{V}_N$ is unitary.    
    \end{proof}
    We shall furthermore introduce the following unitary scaling transformation that scales a problem on $(0,1)$ onto $(0,N)$:
    \begin{definition}
      Let $N\in\mathbb{N}$. Then define for $f\in L^2(0,1)$
      \[
       \mathcal{T}_N f \coloneqq {\tfrac{1}{\sqrt{N}}} f\big(\tfrac{\cdot}{N}\big)
      \]
      and $\mathcal{G}_N\coloneqq \mathcal{V}_N\mathcal{T}_N$
    \end{definition}
    Moreover, we define
    \[
       \partial_\theta \colon H^1_\theta(0,1)\subseteq L^2_\#(0,1)\to L^2_\#(0,1), f\mapsto f'
    \]
    and $H^1_\theta(0,1)=\{f\in H^1(0,1); f(1)=e^{i\theta}f(0)\}$. We use $\partial_\#$ and $H^1_\#(0,1)$, if $\theta=0$.
    \begin{proposition}\label{prop:GT1} Let $N\in\mathbb{N}$. Then
    \begin{enumerate}
     \item[(a)] $\mathcal{T}_N\partial_{\#} =N \partial_{\#,N} \mathcal{T}_N$, where $\partial_{\#,N}$ is the weak derivative with periodic boundary conditions,
     \item[(b)] $\mathcal{G}_N\partial_\# = N\diag\big((\partial_{\theta_k})_{k\in\{0,\ldots,N-1\}}\big)\mathcal{G}_N$, where $\theta_k= 2\pi k/N$.
     \item[(c)] For all $a\in L^\infty_\#(0,1)$ we obtain $\mathcal{G}_N a(N\cdot) = \diag \big((a(\cdot))_{k\in\{0,\ldots,N-1\}}\big)\mathcal{G}_N$.
    \end{enumerate}     
    \end{proposition}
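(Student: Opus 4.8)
The plan is to verify each of the three intertwining relations separately, reducing the computation to the action on smooth periodic representatives and then extending by density, since all operators involved are bounded (the multiplication and diagonal embedding operators) composed with the unitaries $\mathcal{T}_N$ and $\mathcal{V}_N$ established above. For part (a), I would take $f\in H^1_\#(0,1)$ and compute directly: by the definition $\mathcal{T}_N f = \tfrac{1}{\sqrt N} f(\cdot/N)$, so applying $\partial_{\#,N}$ (the derivative on $(0,N)$) and using the chain rule produces the factor $1/N$, giving $\partial_{\#,N}\mathcal{T}_N f = \tfrac{1}{N}\mathcal{T}_N(\partial_\# f)$, which rearranges to the claimed identity $\mathcal{T}_N \partial_\# = N\partial_{\#,N}\mathcal{T}_N$. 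The only care needed is to check that $\mathcal{T}_N$ maps $H^1_\#(0,1)$ into the domain of $\partial_{\#,N}$, i.e.\ that the periodic boundary condition is preserved under scaling, which is immediate from $N$-periodicity.

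For part (b), the idea is to combine (a) with the action of the discrete Gelfand transform $\mathcal{V}_N$ on the derivative on $(0,N)$. The key fact is that $\mathcal{V}_N$ diagonalises $\partial_{\#,N}$ into the fibre derivatives $\partial_{\theta_k}$: differentiating the defining sum $\mathcal{V}_N f(\theta,y)=\tfrac{1}{\sqrt N}\sum_{k=0}^{N-1} f(y+k)e^{-i\theta k}$ in $y$ commutes with the sum, so $\mathcal{V}_N \partial_{\#,N} = \diag\big((\partial_{\theta_k})_k\big)\mathcal{V}_N$; here one must check that the fibre function inherits the twisted boundary condition $g(1)=e^{i\theta}g(0)$, which follows from shifting the summation index and using $N$-periodicity of $f$. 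Composing with (a) then yields
\[
  \mathcal{G}_N\partial_\# = \mathcal{V}_N\mathcal{T}_N\partial_\# = N\,\mathcal{V}_N\partial_{\#,N}\mathcal{T}_N = N\,\diag\big((\partial_{\theta_k})_k\big)\mathcal{V}_N\mathcal{T}_N = N\,\diag\big((\partial_{\theta_k})_k\big)\mathcal{G}_N,
\]
as required.

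For part (c), I would exploit that the rescaled coefficient $a(N\cdot)$ becomes a multiplication by $a$ that is constant across the period on which $\mathcal{V}_N$ integrates. Concretely, $\mathcal{T}_N\big(a(N\cdot) f\big) = \tfrac{1}{\sqrt N} a(\cdot) f(\cdot/N) = a(\cdot)\,\mathcal{T}_N f$ since $a(N\cdot)$ evaluated at $\cdot/N$ is just $a(\cdot)$ and $a$ is $1$-periodic. Applying $\mathcal{V}_N$ to $a(\cdot)\mathcal{T}_N f$ and noting that within each summand $y+k$ the $1$-periodicity gives $a(y+k)=a(y)$, the factor $a(y)$ pulls out of the sum over $k$ in every fibre, producing exactly $\diag\big((a(\cdot))_k\big)\mathcal{V}_N\mathcal{T}_N = \diag\big((a(\cdot))_k\big)\mathcal{G}_N$.

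The main obstacle, and the point deserving the most care, is the bookkeeping of boundary conditions in part (b): one must confirm that $\mathcal{V}_N$ actually maps $H^1_{\#,N}$-functions into the direct sum of the twisted spaces $H^1_{\theta_k}(0,1)$, so that the fibre operators $\partial_{\theta_k}$ act on their correct domains and the diagonal operator on the right-hand side is well defined. This amounts to the index-shift computation showing that $\mathcal{V}_N f(\theta,\cdot)$ satisfies $g(1)=e^{i\theta}g(0)$, which is where the specific phase $e^{-i\theta k}$ in the definition of $\mathcal{V}_N$ is genuinely used; the remaining manipulations in (a) and (c) are routine chain-rule and periodicity arguments.
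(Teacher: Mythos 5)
Your proposal is correct and follows essentially the same route as the paper, which simply states that the result ``follows along elementary calculations'' after reduction to smooth functions; your write-up carries out exactly those calculations (chain rule for $\mathcal{T}_N$, index-shift and periodicity for $\mathcal{V}_N$, and the composition $\mathcal{G}_N=\mathcal{V}_N\mathcal{T}_N$). In fact, your attention to the twisted boundary conditions $g(1)=e^{i\theta_k}g(0)$ in part (b) supplies precisely the detail the paper's two-line proof leaves implicit.
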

    \begin{proof}
     The proof follows along elementary calculations. Note that for (a) and (b) it suffices to prove the assertions for smooth functions, only.
    \end{proof}
    Next, we introduce a static version of the problem in question:
    \begin{definition}
      Let $c>0$ and 
      \[
        \mathcal{M}_c \coloneqq \{ M\in L^\infty(0,1)_\#^{2\times 2}; \Re M\geq c1_{2\times 2}\},
      \]
      where $L^\infty(0,1)_\#\coloneqq\{ a\in L^\infty(\mathbb{R}); a(\cdot+k)=a\quad (k\in\mathbb{Z})\}$.
    \end{definition}
    For all $N\in\mathbb{N}$, find $\begin{pmatrix}u_N \\ v_N\end{pmatrix}\in L^2(0,1)^2$ such that
    \begin{equation}\label{eq:Npropstat}
    \left(  M(N\cdot)+\begin{pmatrix}
                 0 & \partial_\# \\ \partial_\# & 0
                \end{pmatrix}\right)\begin{pmatrix}u_N \\ v_N\end{pmatrix} = \begin{pmatrix}f \\ g\end{pmatrix}
    \end{equation}
  for some $f,g\in L^2(0,1)^2$. Note that \eqref{eq:Npropstat} is well-posedness by \cite[Lemma 2.5]{CW17_FH}. With the help of Proposition \ref{prop:GT1}, we obtain an equivalent formulation of \eqref{eq:Npropstat}
    \begin{cor}\label{cor:GT2} Let $N\in \mathbb{N}$. Then 
    \begin{multline*}
       \begin{pmatrix}
                 \mathcal{G}_N & 0 \\ 0 & \mathcal{G}_N
                \end{pmatrix} \left(  M(N\cdot)+\begin{pmatrix}
                 0 & \partial_\# \\ \partial_\# & 0
                \end{pmatrix}\right) \begin{pmatrix}
                 \mathcal{G}_N & 0 \\ 0 & \mathcal{G}_N
                \end{pmatrix}^*\\ = \left( \diag( M(\cdot))_{k\in\{0,\ldots,N-1\}}+{N}\diag\left(\begin{pmatrix}
                  0 & \partial_{\theta_k} \\ \partial_{\theta_k} & 0
                 \end{pmatrix}
\right)_{k\in\{0,\ldots,N-1\}}\right).
    \end{multline*}
    \end{cor}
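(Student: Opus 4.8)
The plan is to read the identity off directly from Proposition \ref{prop:GT1} by conjugating the operator in \eqref{eq:Npropstat} entrywise with respect to its $2\times 2$ block structure, exploiting that $\mathcal{G}_N$ is unitary. First I would record that $\mathcal{G}_N=\mathcal{V}_N\mathcal{T}_N$ is unitary, being a composition of the unitary operators $\mathcal{V}_N$ and $\mathcal{T}_N$; hence the block-diagonal operator $\diag(\mathcal{G}_N,\mathcal{G}_N)$ is unitary with adjoint $\diag(\mathcal{G}_N^*,\mathcal{G}_N^*)$, so that the transformation appearing in the statement is a genuine unitary equivalence and the adjoints may freely be replaced by inverses.

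Next I would expand the conjugation. Because the outer factors are block-diagonal, the $(i,j)$-entry of
\[
\diag(\mathcal{G}_N,\mathcal{G}_N)\left(M(N\cdot)+\begin{pmatrix}0&\partial_\#\\ \partial_\#&0\end{pmatrix}\right)\diag(\mathcal{G}_N,\mathcal{G}_N)^*
\]
is just $\mathcal{G}_N L_{ij}\mathcal{G}_N^*$, where $L_{ij}$ denotes the corresponding scalar entry, i.e. $L_{11}=M_{11}(N\cdot)$, $L_{22}=M_{22}(N\cdot)$, and $L_{12}=M_{12}(N\cdot)+\partial_\#$, $L_{21}=M_{21}(N\cdot)+\partial_\#$. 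For the multiplication parts I apply Proposition \ref{prop:GT1}(c) to each scalar function $M_{ij}\in L^\infty_\#(0,1)$, giving $\mathcal{G}_N M_{ij}(N\cdot)\mathcal{G}_N^*=\diag(M_{ij}(\cdot))_{k}$. For the off-diagonal derivative I apply Proposition \ref{prop:GT1}(b), which yields $\mathcal{G}_N\partial_\#\mathcal{G}_N^*=N\diag((\partial_{\theta_k})_k)$; here I would note that (b) already matches the domains of the unbounded operators, so no separate domain computation is needed.

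Assembling the four entries, the conjugated operator equals the $2\times 2$ operator matrix $\begin{pmatrix}D_{11}&D_{12}+P\\ D_{21}+P&D_{22}\end{pmatrix}$, where $D_{ij}:=\diag(M_{ij}(\cdot))_{k}$ and $P:=N\diag((\partial_{\theta_k})_k)$, each entry itself being a direct sum over $k\in\{0,\ldots,N-1\}$. The only genuine bookkeeping step — and the point at which an indexing slip is easiest — is to identify a $2\times 2$ matrix of $k$-indexed direct sums with the $k$-indexed direct sum of $2\times 2$ matrices: under the canonical unitary reordering of $\big(L^2(0,1)^N\big)^2$ into $\big(L^2(0,1)^2\big)^N$ that interchanges the component index with the Gelfand-frequency index $k$, this matrix becomes $\diag(M(\cdot))_{k}+N\diag\!\left(\begin{pmatrix}0&\partial_{\theta_k}\\ \partial_{\theta_k}&0\end{pmatrix}\right)_{k}$, exactly the claimed right-hand side. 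I expect this reshuffling to be the main (and essentially only) obstacle: one must check that the natural identification commutes both with the $2\times 2$ matrix multiplication and with the diagonalisation, so that no cross terms coupling distinct frequencies $\theta_k$ are introduced; everything else reduces to Proposition \ref{prop:GT1}.
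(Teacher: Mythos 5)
Your proof is correct and follows essentially the same route the paper intends: the Corollary is stated as a direct consequence of Proposition \ref{prop:GT1}, obtained by conjugating entrywise with the unitary $\diag(\mathcal{G}_N,\mathcal{G}_N)$ and applying parts (b) and (c) to the derivative and multiplication entries, respectively. Your explicit remark about the canonical reordering of $\bigl(L^2(0,1)^N\bigr)^2$ into $\bigl(L^2(0,1)^2\bigr)^N$ is a point the paper leaves implicit, and making it precise is a welcome addition rather than a deviation.
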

    As it has been demonstrated in \cite[Section 3]{CW17_FH}, we obtain that \cite[Theorem 2.4 and Theorem 2.2]{CW17_FH} applies to the setting in \cite[Equation (10)]{CW17_FH}. Here we recall the results found there for the particular case of $n=d=1$. Note that by \cite[Remark 4.6]{CW17_FH} the one-dimensional homogenised coefficient is given by the integral mean.
    \begin{thm}\label{thm:homstat} For all $N\in\mathbb{N}$ and $k\in\{0,\ldots,N-1\}$, we have
    \begin{multline*}
     \left\| M(\cdot)+{N}\begin{pmatrix}
                  0 & \partial_{\theta_k} \\ \partial_{\theta_k} & 0
                 \end{pmatrix} - \left(\int_{(0,1)}M(y)dy+{N}\begin{pmatrix}
                  0 & \partial_{\theta_k} \\ \partial_{\theta_k} & 0
                 \end{pmatrix}\right)\right\| \\ \leq \frac{1}{\pi}\left(2\left(1+\frac{\|M\|_\infty}{c}\right)^2+1\right)\frac{1}{N}.
    \end{multline*}     
    \end{thm}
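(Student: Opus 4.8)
The plan begins with interpreting the statement correctly: the plain difference of the two displayed operators equals $M(\cdot)-\bar M$ with $\bar M:=\int_{(0,1)}M(y)\,dy$, which is $\mathcal{O}(1)$, so the norm in question must be that of the difference of the \emph{inverses}. Writing $B_k:=\begin{pmatrix}0&\partial_{\theta_k}\\\partial_{\theta_k}&0\end{pmatrix}$, $A_k:=M(\cdot)+NB_k$ and $\bar A_k:=\bar M+NB_k$ on $L^2(0,1)^2$, the goal is to bound $\|A_k^{-1}-\bar A_k^{-1}\|$. First I would record uniform bounds: since $\Re M\ge c$ and $\Re\bar M=\int_{(0,1)}\Re M\ge c$, while $B_k$ is skew-adjoint (the $\theta_k$-quasiperiodic boundary conditions give $\partial_{\theta_k}^\ast=-\partial_{\theta_k}$), one has $\Re\langle A_ku,u\rangle=\langle(\Re M)u,u\rangle\ge c\|u\|^2$, and likewise for $\bar A_k$, for $A_k^\ast=M^\ast-NB_k$, and for $\bar A_k^\ast$. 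Hence all four inverses exist and are bounded by $1/c$, uniformly in $k$ and $N$; in particular $\|\bar M\|\le\|M\|_\infty$.

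The core of the argument is the resolvent identity together with the fact that $g:=M(\cdot)-\bar M$ has vanishing mean. I would start from
\[
A_k^{-1}-\bar A_k^{-1}=-A_k^{-1}\,g\,\bar A_k^{-1}
\]
and estimate by duality: for unit vectors $\phi,w$ put $\bar u:=\bar A_k^{-1}\phi$ and $z:=(A_k^\ast)^{-1}w$, so that the corresponding matrix entry equals $-\langle g\bar u,z\rangle$. Because $g$ is mean-free it has a $1$-periodic antiderivative $G$ with $G'=g$, and I rewrite $\langle g\bar u,z\rangle=\int_{(0,1)}z^\ast G'\bar u$. Integrating by parts shifts the derivative onto $z$ and $\bar u$; the boundary term $[z^\ast G\bar u]_0^1$ vanishes because $\bar u$ and $z$ each satisfy $f(1)=e^{i\theta_k}f(0)$ while $G$ is genuinely periodic, leaving $\langle g\bar u,z\rangle=-\int z'^\ast G\bar u-\int z^\ast G\bar u'$. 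The decisive gain of $1/N$ is then read directly off the equations: $\bar A_k\bar u=\phi$ gives $\|\bar u'\|=\|B_k\bar u\|=\tfrac1N\|\phi-\bar M\bar u\|\le\tfrac1N(1+\|M\|_\infty/c)$, and symmetrically $\|z'\|=\|B_kz\|\le\tfrac1N(1+\|M\|_\infty/c)$.

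It remains to assemble the constant. Each of the two integration-by-parts terms pairs one factor that is $\mathcal{O}(1/N)$ and carries $(1+\|M\|_\infty/c)$ (namely $\|z'\|$ or $\|\bar u'\|$) with one factor of the form $\|G\|_\infty\|z\|$ or $\|G\|_\infty\|\bar u\|$, where $\|z\|,\|\bar u\|\le1/c$; this is precisely why the bound comes out with $(1+\|M\|_\infty/c)$ \emph{squared}. The remaining $\tfrac1\pi$ enters through the second factor: a Fourier estimate (Parseval and Cauchy--Schwarz, using $\sum_{n\ne0}n^{-2}=\pi^2/3$) or a Poincaré estimate for the mean-free antiderivative bounds $\|G\|_\infty$ by a dimensional constant times $\|M\|_\infty$, and tracking this with the factor $1/c$ yields $\|G\|_\infty/c\le\tfrac1\pi(1+\|M\|_\infty/c)$ (scalar-wise this is $\tfrac1{2\sqrt3}\le\tfrac1\pi$). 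Summing the two terms then gives $\|A_k^{-1}-\bar A_k^{-1}\|\le\tfrac{2}{\pi N}(1+\|M\|_\infty/c)^2$, dominated by the asserted bound, the extra $+1$ being harmless slack that also absorbs the matrix-norm constants. The step I expect to require the most care is making these constants sharp and uniform in $k$, in particular at the degenerate fibre $k=0$ where $B_0$ has the constant fields in its kernel: there the gain cannot come from $B_0$ alone, but the coercivity $\Re M\ge c$ still closes the argument. This is exactly the bookkeeping packaged by the abstract resolvent theorems \cite[Theorems 2.2 and 2.4]{CW17_FH} quoted before the statement, with $\bar M=\int_{(0,1)}M$ supplied by \cite[Remark 4.6]{CW17_FH}, so in practice I would either reproduce the estimates above or invoke those theorems directly.
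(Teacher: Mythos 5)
Your proof is correct, and your reading of the statement is the intended one: as literally displayed the difference is just $M(\cdot)-\int_{(0,1)}M(y)\,dy$, which is of order one, so the theorem must be (and, in the proof of Theorem \ref{thm:conv_analy}, is used as) an estimate on the difference of the \emph{inverses}. Your route is, however, genuinely different from the paper's: the paper gives no self-contained argument at all, but obtains the theorem by specialising the abstract fibre-homogenisation results \cite[Theorems 2.2 and 2.4]{CW17_FH} (applicable by \cite[Section 3]{CW17_FH}) to the case $n=d=1$, with \cite[Remark 4.6]{CW17_FH} identifying the one-dimensional homogenised coefficient as the integral mean. Your direct argument --- the identity $A_k^{-1}-\bar A_k^{-1}=-A_k^{-1}(M-\bar M)\bar A_k^{-1}$, duality, the $1$-periodic mean-free antiderivative $G$ of $M-\bar M$, integration by parts (the boundary term cancels because the quasi-periodic phases of $z$ and $\bar u$ compensate and $G(1)=G(0)$), and the $1/N$ gain read off from $NB_k\bar u=\phi-\bar M\bar u$ and $NB_k z=M^\ast z-w$ --- is precisely the elementary one-dimensional mechanism that the cited machinery packages. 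What your approach buys is a self-contained proof with an explicit constant, $\frac{2}{\pi N}\bigl(1+\|M\|_\infty/c\bigr)^2$, which indeed sits below the asserted bound; what the paper's citation buys is brevity and the embedding of the result into a theory valid in higher dimensions. Two small points if you write your version out: first, the matrix-norm bookkeeping in $\|G\|_\infty\le\frac{1}{2\sqrt{3}}\|M\|_\infty$ is cleanest if you apply the scalar Fourier/Poincar\'e bound to the scalar functions $a^\ast G b$ for unit vectors $a,b\in\mathbb{C}^2$ (note that $a^\ast(M-\bar M)b$ is mean-free with $L^2$-norm at most $\|M\|_\infty$), which avoids any Frobenius-versus-operator-norm loss; second, your caution about the fibre $k=0$ is unnecessary, since nothing in your argument uses a spectral gap of $B_k$ --- invertibility with bound $1/c$ comes from accretivity ($\Re M\ge c$ plus skew-adjointness of $B_k$) for every $k$, and the factor $1/N$ comes from the equation itself, uniformly in $k$.
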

    \subsection{The dynamic case}
    With the estimate in the latter theorem, we obtain also result for the full time-dependent problem. The strategy has been outlined in the concluding sections of \cite{CW17_1D} already. We will, however, provide the necessary notions and a corresponding estimate in this exposition, as well. For $\rho>0$ and a Hilbert space $H$, we define 
    \[
       L_\rho^2(\mathbb{R};H)\coloneqq \{ f\colon\mathbb{R}\to H; f \text{ measurable}, \int_{\mathbb{R}}\|f(t)\|_H^2\exp(-2\rho t)dt<\infty\},
    \]endowed with the obvious scalar product. Employing the usual identification of functions being equal almost everywhere, we obtain that $L_\rho^2(\mathbb{R};H)$ is a Hilbert space. We denote by $H_\rho^1(\mathbb{R};H)$ the first Sobolev space of weakly differentiable functions with weak derivative being representable as an element of $L_\rho^2(\mathbb{R};H)$. Then we put
    \[
      \partial_t \colon H^1_\rho(\mathbb{R};H)\subseteq L_\rho^2(\mathbb{R};H)\to L_\rho^2(\mathbb{R};H), f\mapsto f'.
    \]
    A spectral representation of $\partial_t$ as multiplication operator is given by the \emph{Fourier--Laplace transformation}, that is, the unitary extension of the operator $\mathcal{L}_\rho\colon L_\rho^2(\mathbb{R}:H)\to L^2(\mathbb{R}:H)$ given by
    \[
      \mathcal{L}_\rho \phi(\xi)=\frac{1}{\sqrt{2\pi}}\int_\mathbb{R} \phi(t)\exp(-it\xi-\rho t)dt\quad(\phi\in C_c(\mathbb{R};H)),
    \]
    where $C_c(\mathbb{R};H)$ is the space of continuous functions with compact support. The spectral representation reads as follows:
    \begin{thm}[{\cite[Corollary 2.5]{KPSTW14_OD}}]
\label{thm:FLT}Let $\rho\in\mathbb{R}$. Then
\[
\partial_{t}=\mathcal{L}_{\rho}^{\ast}(im+\rho)\mathcal{L}_{\rho},
\]where
\begin{align*}
m\colon\{f\in L_{2}(\mathbb{R};H);(t\mapsto tf(t))\in L_{2}(\mathbb{R};H)\}\subseteq L_{2}(\mathbb{R};H) & \to L_{2}(\mathbb{R};H)\\
f & \mapsto(t\mapsto tf(t))
\end{align*}
is the multiplication by the argument operator with maximal domain.
\end{thm}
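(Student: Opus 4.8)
The plan is to reduce the statement to the classical fact that the plain Fourier transform diagonalises differentiation, absorbing the exponential weight by a unitary conjugation. First I would factor the Fourier--Laplace transform. Let $\mathcal{F}\colon L^2(\mathbb{R};H)\to L^2(\mathbb{R};H)$ denote the (unweighted) Fourier transform, i.e. the unitary extension of $\phi\mapsto(\xi\mapsto\frac{1}{\sqrt{2\pi}}\int_\mathbb{R}\phi(t)e^{-it\xi}\,dt)$, and let $\mathcal{E}_\rho\colon L^2_\rho(\mathbb{R};H)\to L^2(\mathbb{R};H)$, $\phi\mapsto(t\mapsto e^{-\rho t}\phi(t))$. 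The operator $\mathcal{E}_\rho$ is unitary since $\int_\mathbb{R}\|e^{-\rho t}\phi(t)\|_H^2\,dt=\int_\mathbb{R}\|\phi(t)\|_H^2 e^{-2\rho t}\,dt$, and its adjoint (inverse) is multiplication by $t\mapsto e^{\rho t}$, mapping $L^2(\mathbb{R};H)$ back into $L^2_\rho(\mathbb{R};H)$. Comparing the defining integrals on the dense set $C_c(\mathbb{R};H)$ shows $\mathcal{L}_\rho\phi=\mathcal{F}\mathcal{E}_\rho\phi$ there, and as both sides are unitary this extends to all of $L^2_\rho(\mathbb{R};H)$, so that $\mathcal{L}_\rho=\mathcal{F}\mathcal{E}_\rho$.

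Next I would record the two operator identities that drive the computation. The classical diagonalisation on the unweighted line reads $\mathcal{F}\partial_t\mathcal{F}^{\ast}=im$, equivalently $\partial_t=\mathcal{F}^{\ast}(im)\mathcal{F}$ on $L^2(\mathbb{R};H)$, the underlying domain statement being $\mathcal{F}[H^1(\mathbb{R};H)]=\dom(m)$. The exponential weight, on the other hand, merely shifts the derivative: computing on $\psi\in C_c^\infty(\mathbb{R};H)$,
\begin{align*}
  \mathcal{E}_\rho\partial_t\mathcal{E}_\rho^{\ast}\psi
   &= e^{-\rho t}\partial_t\bigl(e^{\rho t}\psi\bigr)
    = e^{-\rho t}\bigl(\rho e^{\rho t}\psi+e^{\rho t}\psi'\bigr)\\
   &= (\partial_t+\rho)\psi,
\end{align*}
whence $\mathcal{E}_\rho\partial_t\mathcal{E}_\rho^{\ast}=\partial_t+\rho$, with the leftmost $\partial_t$ acting on $L^2_\rho(\mathbb{R};H)$ and the right-hand one on $L^2(\mathbb{R};H)$.

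Finally I would combine the two. Substituting the Fourier diagonalisation into the shift identity and using $\mathcal{L}_\rho=\mathcal{F}\mathcal{E}_\rho$ together with $\rho=\mathcal{F}^{\ast}\rho\mathcal{F}$ yields
\begin{align*}
  \partial_t
   &=\mathcal{E}_\rho^{\ast}\bigl(\mathcal{F}^{\ast}(im)\mathcal{F}+\rho\bigr)\mathcal{E}_\rho
    =\mathcal{E}_\rho^{\ast}\mathcal{F}^{\ast}(im+\rho)\mathcal{F}\mathcal{E}_\rho\\
   &=(\mathcal{F}\mathcal{E}_\rho)^{\ast}(im+\rho)(\mathcal{F}\mathcal{E}_\rho)
    =\mathcal{L}_\rho^{\ast}(im+\rho)\mathcal{L}_\rho,
\end{align*}
which is the assertion. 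The one genuinely delicate point — and the step I expect to be the main obstacle — is not the formal manipulation but the bookkeeping of domains: I must check that the unitary conjugations carry $\dom(\partial_t)\subseteq L^2_\rho(\mathbb{R};H)$ exactly onto $\dom(im+\rho)=\dom(m)$, and that each identity above, first obtained on a core such as $C_c^\infty(\mathbb{R};H)$, extends to the full domain by the closedness of both sides. Since multiplication by $e^{\pm\rho t}$ is smooth it preserves the relevant Sobolev regularity and absorbs the weight exactly, so these extensions go through; equivalently, once $\mathcal{L}_\rho=\mathcal{F}\mathcal{E}_\rho$ and the unitarity of $\mathcal{E}_\rho$ are in hand, closedness and the domain equality transfer at once from the classical Fourier case.
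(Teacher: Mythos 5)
Your proof is correct: the factorisation $\mathcal{L}_\rho=\mathcal{F}\mathcal{E}_\rho$ into the classical Fourier transform and the unitary weight $\mathcal{E}_\rho\colon L^2_\rho(\mathbb{R};H)\to L^2(\mathbb{R};H)$, combined with $\mathcal{F}\partial_t\mathcal{F}^{\ast}=im$ and the shift identity $\mathcal{E}_\rho\partial_t\mathcal{E}_\rho^{\ast}=\partial_t+\rho$, gives exactly the assertion, and your domain bookkeeping (multiplication by $e^{\pm\rho t}$ carries $H^1_\rho(\mathbb{R};H)$ onto $H^1(\mathbb{R};H)$, hence $\mathcal{L}_\rho$ carries $\dom(\partial_t)$ onto $\dom(m)$) closes the argument. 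The paper itself gives no proof, importing the result as \cite[Corollary 2.5]{KPSTW14_OD}, and your argument is precisely the standard one used in that reference, so your route coincides with the source's.
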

Next, we recall an elementary version of the well-posedness theorem for evolutionary equations, which is particularly relevant to the case studied here. For this, note that we will use the same notation for an operator acting in $H$ and its corresponding lift as an abstract multiplication 
operator on $L_\rho^2(\mathbb{R};H)$.
\begin{thm}[{{\cite[Solution Theory]{PicPhy}, \cite[Theorem 6.2.5]{Picard}}}]\label{t:st} Let $A$ be a skew-selfadjoint operator in ${H}$, $0\leq M_0=M_0^*,M_1\in L({H})$. Assume there exists $c,\rho>0$ with
\begin{equation}\label{e:st}
   \rho\langle  M_0 \phi,\phi\rangle + \Re \langle M_1 \phi,\phi \rangle \geq c\langle \phi,\phi\rangle\quad  (\phi\in \mathcal{H}).
\end{equation}
Then the operator $\mathcal{B} \coloneqq \partial_{t} {M}_0+{M}_1+{A}$ with $D(\mathcal{B})=D(\partial_{t})\cap D(\mathcal{A})$ is closable in $L_{\rho}^2(\mathbb{R};{H})$. Moreover, $\mathcal{S}_\rho \coloneqq \overline{\mathcal{B}}^{-1}$ is well-defined, continuous and bounded with $\|\mathcal{S}_\rho\|_{L(L_\rho^2)}\leq 1/c$.  
\end{thm}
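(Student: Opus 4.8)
The plan is to establish the coercivity estimate $\Re\langle\mathcal{B}\phi,\phi\rangle_{L_\rho^2}\geq c\|\phi\|_{L_\rho^2}^2$ on the core $D(\partial_t)\cap D(A)$ and, separately, the same estimate for the formal adjoint; these two facts together force $\overline{\mathcal{B}}$ to be bijective with $\|\overline{\mathcal{B}}^{-1}\|_{L(L_\rho^2)}\leq 1/c$. The decisive computation is the real part of the quadratic form. First I would invoke Theorem \ref{thm:FLT} to write $\partial_t=\mathcal{L}_\rho^*(im+\rho)\mathcal{L}_\rho$ and note that $M_0$, $M_1$ and $A$, being abstract lifts of operators acting in $H$ and hence independent of the time variable, commute with $\mathcal{L}_\rho$. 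Transforming the term $\partial_t M_0$ to the spectral side turns it into multiplication by $(it+\rho)$, so its contribution to $\langle\mathcal{B}\phi,\phi\rangle_{L_\rho^2}$ is $\int_{\mathbb{R}}(it+\rho)\langle M_0(\mathcal{L}_\rho\phi)(t),(\mathcal{L}_\rho\phi)(t)\rangle_H\dt$. Because $M_0=M_0^*$, the pairing $\langle M_0(\mathcal{L}_\rho\phi)(t),(\mathcal{L}_\rho\phi)(t)\rangle_H$ is real, so the purely imaginary factor $it$ contributes nothing to the real part and only $\rho\langle M_0\phi,\phi\rangle_{L_\rho^2}$ survives; skew-selfadjointness of $A$ gives $\Re\langle A\phi,\phi\rangle_{L_\rho^2}=0$. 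Hence
\[
\Re\langle\mathcal{B}\phi,\phi\rangle_{L_\rho^2}=\rho\langle M_0\phi,\phi\rangle_{L_\rho^2}+\Re\langle M_1\phi,\phi\rangle_{L_\rho^2}\geq c\|\phi\|_{L_\rho^2}^2,
\]
the final inequality being nothing but the pointwise-in-time hypothesis \eqref{e:st} integrated against $\exp(-2\rho t)$. By Cauchy--Schwarz this yields $\|\mathcal{B}\phi\|_{L_\rho^2}\geq c\|\phi\|_{L_\rho^2}$, which already shows that $\mathcal{B}$ is closable and that its closure is injective with closed range.

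To promote this lower bound to full invertibility, I would run the identical argument for the adjoint. In the weighted space one has $\partial_t^*=-\partial_t+2\rho$, immediate from $\partial_t=\mathcal{L}_\rho^*(im+\rho)\mathcal{L}_\rho$ together with $(im)^*=-im$, so that $\mathcal{B}^*$ is the restriction of $M_0(-\partial_t+2\rho)+M_1^*-A$. Repeating the spectral-side calculation, the scalar factor multiplying $M_0$ is now $(-it+\rho)$, whose real part is again $\rho$, while $\Re\langle M_1^*\phi,\phi\rangle=\Re\langle M_1\phi,\phi\rangle$ and $\Re\langle-A\phi,\phi\rangle=0$. Thus the adjoint satisfies the very same estimate $\Re\langle\mathcal{B}^*\phi,\phi\rangle_{L_\rho^2}\geq c\|\phi\|_{L_\rho^2}^2$, whence $\mathcal{B}^*$ is injective and the range of $\overline{\mathcal{B}}$ is dense. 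A closed operator that is bounded below (hence has closed range) and has dense range is onto; combining this with the lower bound $c$ shows that $\mathcal{S}_\rho=\overline{\mathcal{B}}^{-1}$ is everywhere defined and bounded with $\|\mathcal{S}_\rho\|_{L(L_\rho^2)}\leq 1/c$.

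I expect the only real obstacle to lie in the functional-analytic bookkeeping rather than in the estimates themselves: one must verify that $D(\partial_t)\cap D(A)$ is indeed a core on which the quadratic-form manipulations are licit, that the commutation of $M_0,M_1,A$ with $\mathcal{L}_\rho$ holds as an identity of (possibly unbounded) operators, and that the adjoint of the \emph{sum} $\partial_t M_0+M_1+A$ coincides with the formal adjoint used above. Once these routine points are handled exactly as in the solution theory of \cite{PicPhy,Picard}, the two coercivity estimates carry the entire argument, and the bound $1/c$ is read off directly from the constant in \eqref{e:st}.
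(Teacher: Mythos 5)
The paper itself gives no proof of this theorem: it is quoted from Picard's solution theory (\cite[Solution Theory]{PicPhy}, \cite[Theorem 6.2.5]{Picard}), so your attempt can only be measured against the standard argument of those references. Your skeleton --- strict accretivity of $\mathcal{B}$ on $D(\partial_t)\cap D(A)$, strict accretivity of the adjoint, and the abstract fact that a closed operator which is bounded below and has dense range is boundedly invertible --- is indeed that standard argument, and your individual computations are correct: multiplication by $it+\rho$ on the spectral side, realness of $\langle M_0\cdot,\cdot\rangle_H$, $\Re\langle A\phi,\phi\rangle=0$, and $\partial_t^*=-\partial_t+2\rho$ in $L^2_\rho$.

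There are two gaps, one minor and one at the heart of the matter. Minor: the bound $\|\mathcal{B}\phi\|\geq c\|\phi\|$ does \emph{not} by itself imply closability of $\mathcal{B}$; what yields closability is accretivity (a densely defined accretive operator is closable), which you have established but do not invoke for this purpose. Major: you assert that ``$\mathcal{B}^*$ is the restriction of $M_0(-\partial_t+2\rho)+M_1^*-A$''. The inclusion that follows from elementary manipulations is the \emph{opposite} one: for unbounded $S,T$ one only has $(S+T)^*\supseteq S^*+T^*$, so the formal adjoint on $D(\partial_t)\cap D(A)$ is a restriction of $\mathcal{B}^*$, and the containment can in general be strict. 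Consequently your coercivity estimate for the adjoint is proved only on $D(\partial_t)\cap D(A)$, whereas to conclude $\ker\mathcal{B}^*=\{0\}$ (hence density of $\rge(\overline{\mathcal{B}})$) you need it on all of $D(\mathcal{B}^*)$. Identifying $D(\mathcal{B}^*)$ --- equivalently, a ``weak equals strong'' statement for the adjoint equation, usually proved by mollifying with $(1+\varepsilon\partial_t)^{-1}$ or by fibre decomposition --- is precisely the nontrivial content of the cited solution theory, so dismissing it as routine bookkeeping ``handled exactly as in \cite{PicPhy,Picard}'' is circular. The clean repair, and the route the literature (and this paper, in the proof of Theorem \ref{thm:conv_analy}) actually takes, is to apply the Fourier--Laplace transform first: for each fixed $\xi\in\mathbb{R}$ the fibre operator $(i\xi+\rho)M_0+M_1+A$ is a \emph{bounded} perturbation of the skew-selfadjoint $A$, so its adjoint is exactly $(-i\xi+\rho)M_0+M_1^*-A$, both fibre operators are strictly accretive with constant $c$, each fibre is therefore boundedly invertible with norm at most $1/c$ uniformly in $\xi$, and the multiplication operator built from these inverses, conjugated back by $\mathcal{L}_\rho$, is the desired $\mathcal{S}_\rho$.
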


We can now state and prove the full time-dependent version of Theorem \ref{thm:homstat}. We shall also refer to \cite[Theorem 7.1]{CW17_1D} for a corresponding result with $\mathbb{R}$ instead of $(0,1)$ as underlying state space.

\begin{thm}\label{thm:conv_analy} Let $\rho>0$, $M_0,M_1\in L^\infty(0,1)_\#^{2\times 2} (\subseteq L(L^2(0,1)^2))$, $M_0=M_0^\ast\geq 0$. Assume there exists $c>0$ such that
\[
   \rho \langle M_0\phi,\phi\rangle+\Re\langle M_1\phi,\phi\rangle \geq c\langle \phi,\phi\rangle \quad (\phi\in L^2(0,1)^2),
\]
set $A\coloneqq \left(\begin{smallmatrix} 0 &\partial_\# \\ \partial_\# & 0\end{smallmatrix}\right)$, $H=L^2(0,1)^2$.
Then, there exists $\kappa\geq 0$ such that for all $N\in \mathbb{N}$, we have
\[
    \| \left((\partial_t M_0(N\cdot)+M_1(N\cdot) + A)^{-1}-(\partial_t M_0^{\textnormal{av}}+M_1^{\textnormal{av}} + A)^{-1}\right)\partial_t^{-2}\|_{L(L_\rho^2(\mathbb{R};H))}\leq \frac{\kappa}{N},
\] 
where $M_j^\textnormal{av}\coloneqq \int_{(0,1)}M_j(y)dy$ for all $j\in\{0,1\}$.
\end{thm}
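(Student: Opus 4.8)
The plan is to pass to the frequency domain by the Fourier--Laplace transform $\mathcal{L}_\rho$ of Theorem \ref{thm:FLT}, reduce the evolutionary problem to a family of static problems indexed by the frequency, apply the resolvent estimate of Theorem \ref{thm:homstat} fibre-wise, and finally absorb the frequency dependence into the regularising factor $\partial_t^{-2}$. First I would observe that $M_0(N\cdot)$, $M_1(N\cdot)$ and $A$ act only in the spatial variable and therefore commute with $\mathcal{L}_\rho$, whereas $\partial_t=\mathcal{L}_\rho^\ast(im+\rho)\mathcal{L}_\rho$ turns into multiplication by the complex number $z\coloneqq i\xi+\rho$ on the fibre at $\xi\in\mathbb{R}$. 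Conjugating with the unitary $\mathcal{L}_\rho$, the operator $\partial_t M_0(N\cdot)+M_1(N\cdot)+A$ becomes the decomposable (multiplication-in-$\xi$) operator with fibre $M_\xi(N\cdot)+A$, where $M_\xi\coloneqq zM_0+M_1$, and $\partial_t^{-2}$ becomes multiplication by $z^{-2}$. Since $M_0=M_0^\ast$ one has $\Re(zM_0)=\rho M_0$, hence $\Re M_\xi=\rho M_0+\Re M_1\ge c$ pointwise, so $M_\xi\in\mathcal{M}_c$ for every $\xi$ and each fibre problem is well posed with inverse bounded by $1/c$ (Theorem \ref{t:st}, or \cite[Lemma 2.5]{CW17_FH}); these inverses thus assemble into a bounded decomposable operator equal to the inverse of the full operator. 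Because $\mathcal{L}_\rho$ is unitary, the norm in question equals
\[
   \operatorname*{ess\,sup}_{\xi\in\mathbb{R}}\frac{1}{|z|^2}\bignorm{(M_\xi(N\cdot)+A)^{-1}-(M_\xi^{\textnormal{av}}+A)^{-1}}{L(H)},
\]
where $M_\xi^{\textnormal{av}}=\int_{(0,1)}M_\xi(y)\,dy=zM_0^{\textnormal{av}}+M_1^{\textnormal{av}}$, the last equality because integration commutes with multiplication by the constant $z$.

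Next I would diagonalise each fibre by the discrete Gelfand transform. By Corollary \ref{cor:GT2}, conjugating with $\diag(\mathcal{G}_N,\mathcal{G}_N)$ turns $M_\xi(N\cdot)+A$ into $\diag\big(M_\xi(\cdot)+N\left(\begin{smallmatrix}0&\partial_{\theta_k}\\\partial_{\theta_k}&0\end{smallmatrix}\right)\big)_{k}$, while the constant coefficient $M_\xi^{\textnormal{av}}$ remains block-diagonal with the same constant in every block. Hence the fibre norm above equals the maximum over $k$ of the corresponding block resolvent differences, and Theorem \ref{thm:homstat} applied to $M=M_\xi\in\mathcal{M}_c$ bounds each block by $\frac{1}{\pi}\big(2(1+\|M_\xi\|_\infty/c)^2+1\big)\frac1N$. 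Using $\|M_\xi\|_\infty\le|z|\,\|M_0\|_\infty+\|M_1\|_\infty$, the whole expression is dominated by
\[
   \frac{1}{N}\cdot\frac{1}{\pi|z|^2}\left(2\Big(1+\tfrac{|z|\,\|M_0\|_\infty+\|M_1\|_\infty}{c}\Big)^2+1\right).
\]

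It remains to take the supremum over $\xi$, and this is where the decisive point lies. The constant furnished by Theorem \ref{thm:homstat} is \emph{not} uniform in the frequency: the bracket grows quadratically in $|z|$ because $\|M_\xi\|_\infty$ grows linearly in $|z|$. This is precisely why the factor $\partial_t^{-2}$ is built into the statement: division by $|z|^2$ exactly tempers this growth, so that the quotient stays bounded as $|z|\to\infty$, with limit $\tfrac{2}{\pi}\|M_0\|_\infty^2/c^2$; a single power $\partial_t^{-1}$ would leave a linearly growing factor and would not suffice. Since moreover $|z|=\sqrt{\xi^2+\rho^2}\ge\rho>0$, the map $\xi\mapsto\frac{1}{\pi|z|^2}\big(2(1+(|z|\|M_0\|_\infty+\|M_1\|_\infty)/c)^2+1\big)$ is continuous and bounded on $\mathbb{R}$, and its supremum $\kappa$ yields the claimed bound $\kappa/N$. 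The main work is thus conceptual rather than computational: identifying the decomposable structure after $\mathcal{L}_\rho$ and verifying that the frequency-dependent constant from the static theorem, once multiplied by $|z|^{-2}$, is uniformly bounded; the strict positivity $\rho>0$ is exactly what keeps the low-frequency end $\xi\to0$ under control.
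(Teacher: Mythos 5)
Your proposal is correct and follows essentially the same route as the paper's proof: reduce via unitarity of $\mathcal{L}_\rho$ to a fibre-wise static estimate at each frequency $\xi$, note that $(i\xi+\rho)M_0+M_1\in\mathcal{M}_c$, apply Theorem \ref{thm:homstat} together with Corollary \ref{cor:GT2}, and absorb the quadratic growth in $|i\xi+\rho|$ into the factor $\partial_t^{-2}$, using $\rho>0$ to control the low-frequency end. Your write-up merely makes explicit the frequency-dependent constant and the block-diagonal reduction that the paper leaves implicit.
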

\begin{proof}
 Applying the unitarity of the Fourier--Laplace transformation and the spectral representation of $\partial_t$, we deduce that the claim is equivalent to showing that there exists $\kappa\geq 0$ such that for all $N\in\mathbb{N}$ and $\xi\in\mathbb{R}$:
 \begin{equation}\label{eq:FLTest}
      \| \left(((i\xi+\rho) M_0(N\cdot)+M_1(N\cdot) + A)^{-1}-((i\xi+\rho) M_0^{\textnormal{av}}+M_1^{\textnormal{av}} + A)^{-1}\right)(i\xi+\rho)^{-2}\|_{L(H)}\leq \frac{\kappa}{N}.
 \end{equation}For this, we deduce from the positive definiteness estimate imposed on $M_0$ and $M_1$ that
 \[
    (i\xi+\rho) M_0(\cdot)+M_1(\cdot) \in \mathcal{M}_c
 \]
 for all $\xi \in \mathbb{R}$. Hence, using Theorem \ref{thm:homstat} and Corollary \ref{cor:GT2}, we obtain the existence of $\kappa\geq 0$ such that for all $N\in\mathbb{N}$ and $\xi\in\mathbb{R}$
 \begin{multline*}
       \| \left(((i\xi+\rho) M_0(N\cdot)+M_1(N\cdot) + A)^{-1}-((i\xi+\rho) M_0^{\textnormal{av}}+M_1^{\textnormal{av}} + A)^{-1}\right)\|_{L(H))} \\ \leq  \frac{\kappa}{N}(1+|\xi|^2)(1+\|M_0\|_\infty+\|M_1\|_\infty)^2.
 \end{multline*}
 Thus, we conclude
 \begin{align*}
       & \| \left(((i\xi+\rho) M_0(N\cdot)+M_1(N\cdot) + A)^{-1}-((i\xi+\rho) M_0^{\textnormal{av}}+M_1^{\textnormal{av}} + A)^{-1}\right)(i\xi+\rho)^{-2}\|_{L(H))}\\
        & \leq \frac{\kappa}{N}(1+|\xi|^2)(1+\|M_0\|_\infty+\|M_1\|_\infty)^2\frac{1}{|i\xi +\rho|^2}=\frac{\kappa}{N}\frac{1+\xi^2}{\rho^2+\xi^2}(1+\|M_0\|_\infty+\|M_1\|_\infty)^2,
 \end{align*}
 which implies \eqref{eq:FLTest} and, thus, the assertion.
\end{proof}

  \section{Numerical implementation}\label{sec:numerics}
    We use as numerical method a discontinuous Galerkin method in time and a continuous
    Galerkin method in space. For a similar problem this approach is already considered and analysed in \cite{FrTW16}.
    Therefore, we will only describe the method here shortly and point to the differences
    in the numerical analysis.
    
    \subsection{Numerical method}
    We will start by describing the method and providing a convergence result for an arbitrary problem of type \eqref{eq:Lu}, that is, we shall focus on problems of the type
    \begin{equation}\label{eq:Lu1}
       \left(\partial_t M_0 + M_1 +A\right) U = F,
    \end{equation}
    where $A=\begin{pmatrix} 0 & \partial_\# \\ \partial_\# & 0       
             \end{pmatrix}$ and $M_0=M_0^\ast\geq 0$, $M_1$ are in $L^\infty(\Omega)^{2\times 2}$, which are readily extended to operators acting on $L_\rho^2(\mathbb{R};L^2(\Omega)^2)$. Throughout, we shall assume 
             \[
               \rho \langle M_0 \phi,\phi\rangle + \Re \langle M_1\phi,\phi\rangle\geq c\langle \phi,\phi\rangle
             \]for some $c>0$ and all $\phi\in L^2(\Omega)^2$.
    
    Let the time-interval $[0,T]$ be partitioned into subintervals $I_m=(t_{m-1},t_m]$ of 
    length $\tau_m$ for $m\in\{1,2,\ldots,M\}$ with $t_0=0$ and $t_M=T$. 
    Let the space-interval $\Omega:=(0,1)$ also be partitioned into subintervals $J_k=[x_{k-1},x_k]$
    of length $h_k$ for $k\in\{1,2,\dots,K\}$ with $x_0=0$ and $x_K=1$.
    Furthermore, let a temporal-polynomial degree $q\in \N$ and a spatial-polynomial degree $p\in\N$ be given. 
    
    Then we define the discrete space 
    \[
      \U^{h,\tau}
      \!:=\! \left\{
            (u_h,v_h)\in H_\rho(\R;H):
            u_h|_{I_m},
            v_h|_{I_m}\!\in\PS_q(I_m;V(\Omega)),
            m\!\in\!\{1,\dots,M\}
         \right\},
    \]
    where the spatial space is
    \begin{align*}
      V(\Omega)
        &:=\left\{v\in H^1_{\#}(\Omega);\,v|_{J_k}\in\PS_p(J_k),k\!\in\!\{1,\dots,K\}\right\},
    \end{align*}
    Furthermore, $\PS_q(I_m)$ is the space of polynomials of degree up to $q$ on the interval $I_m$ 
    and similarly $\PS_p(J_k)$. Thus our discrete space consists of function that are
    piece-wise polynomials of degree $p$ and continuous w.r.t.~the space variable, and
    piece-wise polynomial of degree $q$ and discontinuous at the time-points $t_k$ w.r.t.~time.
    
    The method reads: 
    For given $F\in \U^{h,\tau}$ and $x_0\in H$, find $\U\in\U^{h,\tau}$, such that for 
    all $\Phi\in \U^{h,\tau}$ and $m\in\{1,2,\dots,M\}$ it holds
    \begin{equation}\label{eq:discr_quad_form}
      \Qmr{(\partial_t M_0+M_1+A)\U,\Phi}
        +\scp{M_0 \jump{\U}_{m-1}^{x_0},{\Phi}^+_{m-1}}
        =\Qmr{ F,\Phi }.
    \end{equation}
      
    Here, we denote by 
    \[
      \jump{\U}_{m-1}^{x_0}:=
      \begin{cases}
        \U(t_{m-1}+)-U(t_{m-1}-),& m\in\{2,\ldots,M\}\\ 
        \U(t_0+)-x_0,& m=1,
      \end{cases} 
    \]
    the jump at $t_{m-1}$, by $\Phi^+_{m-1}:= \Phi(t_{m-1}+)$ and by 
    \[
      \Qmr{a,b}:
                =\frac{\tau_m}{2} \sum_{i=0}^q {\omega}^m_i \scp{a(\tmi),b(\tmi)}
    \]
    a right-sided weighted Gau\ss--Radau quadrature formula on $I_m$ approximating 
    \[
      \scprm{a,b} \coloneqq 
        \intop_{t_{m-1}}^{t_m} 
        \scp{a(t),b(t)} \exp(-2\rho(t-t_{m-1})) \mathrm{d} t,
    \]
    see \cite{FrTW16} for further details. We denote by $U^{h,\tau}_N$ the numerical solution obtained
    by above method \eqref{eq:discr_quad_form} for the problem with periodic, rough coefficients
    and by $U^{h,\tau}$ for the homogenised data.
    
    \subsection{Numerical analysis}
    We are ready to provide the convergence result for the above method assuming enough 
    regularity of the solution of Example \eqref{eq:Lu1} measuring the error in an $L^\infty$-$L^2$ 
    sense with
    \[
      E^2_{\sup}(a):=\sup_{t\in[0,T]}\langle M_0 a(t),a(t)\rangle_{L^2(\Omega)^2}
    \]
    and with the discrete version of the $L^2_\rho(\R;H)$-norm, given by
    \[
      E^2_Q(a):=\e^{2\rho T}\sum_{m=1}^M\Qmr{a,a}\e^{-2\rho t_{m-1}}.
    \]
    \begin{thm}\label{thm:conv_numer}
      We assume for the solution $U=(u,v)$ of Example \eqref{eq:Lu1} the 
      regularity     
      \[
        U\in H_\rho^{1}(\R;H_{\#}^p(\Omega)\times H_{\#}^p(\Omega))\cap 
             H_\rho^{q+3}(\R;L^2(\Omega)\times L^2(\Omega)) 
      \]
      as well as 
      \[
        AU\in H_\rho(\R; H_{\#}^p(\Omega)\times H_{\#}^p(\Omega)).
      \]
      Then we have for the error of the numerical solution by 
      \eqref{eq:discr_quad_form} with a generic constant $C$
      \[
        E^2_{\sup}(U-U^{h,\tau})+
        E^2_Q(U-U^{h,\tau})
        \leq C \e^{2\rho T}(\tau^{2(q+1)} + T h^{2p}).
      \]
    \end{thm}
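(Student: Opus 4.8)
The plan is to adapt the a~priori error analysis of \cite{FrTW16} to the present block-operator setting, following the standard split-and-test paradigm for discontinuous Galerkin schemes. Both the exact solution $U$ of \eqref{eq:Lu1} and the discrete solution $U^{h,\tau}$ solve a version of \eqref{eq:discr_quad_form}: the exact solution does so with all jumps $\jump{U}_{m-1}^{x_0}$ vanishing (by continuity in time and the compatibility $x_0=U(0)$) and with the quadrature $\Qmr{\cdot,\cdot}$ replaced by the exact weighted inner product $\scprm{\cdot,\cdot}$. Subtracting the two identities yields a perturbed Galerkin orthogonality for the error. The first step is to introduce a space--time projection $\Pi=\Pi_h\Pi_\tau$ onto $\U^{h,\tau}$, where $\Pi_h$ is a continuous spatial interpolant onto $V(\Omega)$ and $\Pi_\tau$ the right-biased (upwind) $\PS_q$-interpolant in time reproducing the value at the right endpoint $t_m$, and to split $U-U^{h,\tau}=\eta+\xi$ with $\eta:=U-\Pi U$ and $\xi:=\Pi U-U^{h,\tau}\in\U^{h,\tau}$.

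First I would record the approximation properties of $\eta$. Since $A$ differentiates in space once, the decisive spatial term is $\|A\eta\|=\ord{h^p}$, and this is exactly where the hypothesis $AU\in H_\rho(\R;H^p_\#(\Omega)\times H^p_\#(\Omega))$ enters, supplying the regularity needed to bound the spatial derivatives produced by $A$ at order $h^p$; the plain term $\|\eta\|$ is then of higher order $\ord{h^{p+1}}$. In time, the Gauss--Radau interpolation error is $\ord{\tau^{q+1}}$ in the relevant weighted norms, drawing on $U\in H^{q+3}_\rho(\R;L^2(\Omega)\times L^2(\Omega))$. Squaring and summing over the slabs reproduces the right-hand side $\tau^{2(q+1)}+Th^{2p}$.

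Next comes the stability step controlling $\xi\in\U^{h,\tau}$, which I would carry out by testing the error equation with $\Phi=\xi$. The key is the discrete coercivity of the tested form. Because $A$ is skew-selfadjoint on $H$ we have $\Re\scp{A\phi,\phi}=0$, so $A$ drops out of the real part; combined with $M_0=M_0^\ast\geq 0$ and the positive-definiteness hypothesis $\rho\scp{M_0\phi,\phi}+\Re\scp{M_1\phi,\phi}\geq c\scp{\phi,\phi}$, the real part of the quadrature form is bounded below by a multiple of $E_Q(\xi)^2$. The upwind jump contributions are handled by the algebraic identity
\[
\Re\scp{M_0(a^+-a^-),a^+}=\tfrac12\big(\scp{M_0 a^+,a^+}-\scp{M_0 a^-,a^-}+\scp{M_0(a^+-a^-),(a^+-a^-)}\big),
\]
which telescopes across the time slabs and yields control of $E_{\sup}(\xi)^2$ together with the accumulated jump energy. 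This produces a bound of the form $E_{\sup}(\xi)^2+E_Q(\xi)^2\leq C\,(\text{interpolation residuals in }\eta)$; estimating the right-hand side by Cauchy--Schwarz and the approximation bounds above, and finally the triangle inequality applied to $U-U^{h,\tau}=\eta+\xi$ together with the weight $\e^{2\rho T}$, closes the argument.

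The step I expect to be the main obstacle is the consistency error of the Gauss--Radau quadrature. The exact solution satisfies the variational identity with the exponentially weighted integral $\scprm{\cdot,\cdot}$, whereas the scheme uses $\Qmr{\cdot,\cdot}$; inserting the exact solution into the discrete form therefore leaves a quadrature defect that must be shown to be of order $\tau^{q+1}$. Gauss--Radau with $q+1$ nodes integrates polynomials of degree up to $2q$ exactly, but neither the weight $\exp(-2\rho(t-t_{m-1}))$ nor the residual $(\partial_t M_0+M_1+A)U$ is a polynomial in time, so this defect does not vanish and has to be estimated by expanding the integrand and absorbing the remainder into the surplus temporal regularity $H^{q+3}_\rho$. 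This quadrature-consistency estimate, rather than the coercivity or the interpolation bounds, is the part that must be transferred most carefully from \cite{FrTW16}.
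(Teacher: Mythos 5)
Your proposal is correct and takes essentially the same approach as the paper: the paper's own proof consists of invoking the analysis of \cite{FrTW16} wholesale, remarking only that the switch from homogeneous Dirichlet to periodic boundary conditions is harmless because all spatial estimates used there are local, and your split-and-test argument (interpolation error, discrete coercivity via skew-selfadjointness of $A$ and positivity of $\rho M_0+\Re M_1$, telescoping jump identity, and Gauss--Radau quadrature consistency absorbed by the $H^{q+3}_\rho$ regularity) is precisely a reconstruction of that cited analysis. One cosmetic slip: under the assumed regularity $U\in H^1_\rho(\R;H^p_\#(\Omega)\times H^p_\#(\Omega))$ the plain interpolation error $\|\eta\|$ is only $\ord{h^{p}}$ rather than $\ord{h^{p+1}}$ (the latter would require $H^{p+1}$ in space), but this does not affect the claimed rate.
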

    \begin{proof}
      The proof is basically identical to the one given in \cite{FrTW16}. The only difference
      being the periodic boundary condition instead of the homogeneous Dirichlet condition.
      But all estimates are the same, as only local estimates in space are used, independent of
      boundary conditions.
    \end{proof}
    
    Considering now the problem coming from the homogenisation process, we essentially have 
    two different problems we can approximate numerically, see Figure~\ref{fig:sketch}, where 
    in addition $U_N$ denotes the solution to the problem with rough coefficients.
    \begin{figure}[tb]
      \begin{center}
        \includegraphics[width=0.5\textwidth]{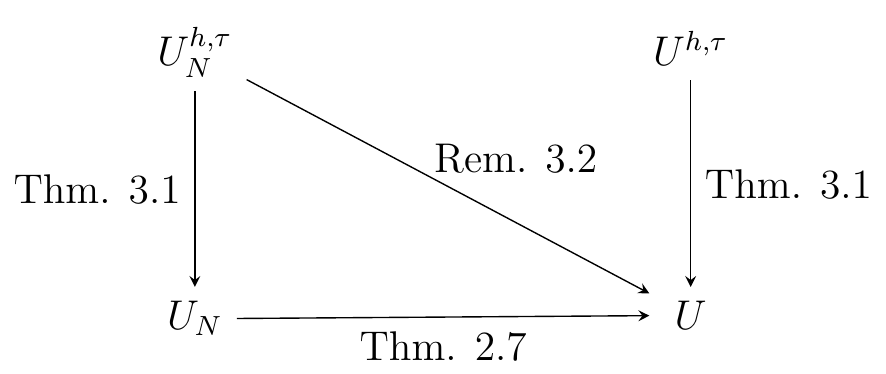}
      \end{center}
      \caption{Diagram showing the connections between the different problems\label{fig:sketch}}
    \end{figure}
  \begin{rem}\label{rem:conv}
    Following the diagram in Figure\ref{fig:sketch}, we have by the 
    Theorems~\ref{thm:conv_analy} and \ref{thm:conv_numer}
    for a the suitable choice of polynomial degrees
    $
      p=q+1\geq 1
    $
    and meshwidths
    $
      \tau=c_1h=\frac{c_2}{N},\,c_1,c_2>0
    $
    the convergence result
    \[
      E_Q(U_N^{h,\tau}-U)
        \leq E_Q(U_N^{h,\tau}-U_N)+E_Q(U_N-U)
        \leq E_Q(U_N^{h,\tau}-U_N)+C\|U_N-U\|_{H^1_\rho(\R,H)}        
        \leq C N^{-1},
    \]
    where the second inequality comes from Sobolev's embedding theorem (see e.g.~\cite[Lemma 5.2]{KPSTW14_OD})
    and the final one from applying Theorems~\ref{thm:conv_analy} and \ref{thm:conv_numer}. Note that for this estimate to hold we have to impose suitable regularity in time for the right-hand side in \eqref{eq:Lu} (or \eqref{eq:Lu1}).
  \end{rem}

  \subsection{Numerical example}\label{sec:simulation}    
    Let $N\in\N$ be even and with 
    \[
     \eps_N(x):=\begin{cases}
                  1,& \exists i\in\N_0:x\in\left[\frac{2i}{N},\frac{2i+1}{N}\right)\\
                  0,& otherwise
                \end{cases},\quad
     \sigma_N(x):=1-\eps_N(x)
    \]
    we consider the rough-coefficient problem for $U_N=(E_N,H_N)$
    \begin{gather}\label{eq:prob1}
      \left( 
        \partial_t
        \begin{pmatrix}
          \eps_N & 0\\
               0 & 1
        \end{pmatrix}
        +\begin{pmatrix}
          \sigma_N & 0\\
                 0 & 0
        \end{pmatrix}
        +\begin{pmatrix}
          0 & \partial_\# \\
          \partial_\# & 0
        \end{pmatrix}
      \right)\begin{pmatrix}
              E_N\\
              H_N
            \end{pmatrix}
      =\begin{pmatrix}
        J\\
        K
        \end{pmatrix}
    \end{gather}
    and the homogenised problem for $U=(E,H)$
    \begin{gather}\label{eq:prob1_hom}
      \left( 
        \partial_t
        \begin{pmatrix}
          \frac{1}{2} & 0\\
               0 & 1
        \end{pmatrix}
        +\begin{pmatrix}
          \frac{1}{2} & 0\\
                 0 & 0
        \end{pmatrix}
        +\begin{pmatrix}
          0 & \partial_\# \\
          \partial_\# & 0
        \end{pmatrix}
      \right)\begin{pmatrix}
              E\\
              H
            \end{pmatrix}
      =\begin{pmatrix}
        J\\
        K
        \end{pmatrix},
    \end{gather}
    where $J(t,x)=\sin(2\pi x)\cdot\min\{1,10t\}$ and $K(t,x)=0$ for all $t>0,x\in[0,1]$
    For our numerical experiment we use the Matlab/Octave software SOFE \cite{larscode}.
    The exact solutions are unknown. Therefore, we use reference solutions computed on a very 
    fine grid and higher polynomial degree in the computation of the errors.
    
    In Table~\ref{tab:prob1}
    \begin{table}[tb]
       \caption{Convergence results for $U_N-U_N^h$ and $U-U_N^h$ of problem \eqref{eq:prob1}
                \label{tab:prob1}}
       \begin{center}   
        \begin{tabular}{rllllllll}
           $n$ & 
           \multicolumn{2}{c}{$E_{\sup}(U_N-U_N^{h,\tau})$} &
           \multicolumn{2}{c}{$E_Q(U_N-U_N^{h,\tau})$} & 
           \multicolumn{2}{c}{$E_{\sup}(U-U_N^{h,\tau})$} &
           \multicolumn{2}{c}{$E_Q(U-U_N^{h,\tau})$}\\
           \hline
            4 & 2.857e-03 &      &  1.117e-03 &      & 1.381e-01 &      & 3.683e-02 &      \\
            8 & 9.490e-04 & 1.59 &  3.623e-04 & 1.62 & 3.418e-02 & 2.01 & 1.297e-02 & 1.51 \\
           16 & 2.802e-04 & 1.76 &  1.151e-04 & 1.65 & 1.328e-02 & 1.36 & 4.463e-03 & 1.54 \\
           32 & 8.611e-05 & 1.70 &  3.713e-05 & 1.63 & 5.890e-03 & 1.17 & 2.039e-03 & 1.13 \\
           64 & 2.306e-05 & 1.90 &  9.136e-06 & 2.02 & 2.802e-03 & 1.07 & 9.983e-04 & 1.03
        \end{tabular}
       \end{center}
    \end{table}
    we present the simulation results of $U_N^{h,\tau}=(E_N^{h,\tau},H_N^{h,\tau})$ for 
    $h=1/K$, $\tau=1/M$ and $M=2K=8N$ and polynomial degrees $p=q+1=2$. 
    In the second and third column we see almost second order convergence of 
    $U_N^{h,\tau}$ towards $U_N=(E_N,H_N)$ in accordance with Theorem~\ref{thm:conv_numer}, while in the last two columns
    we observe first order convergence of $U_N^{h,\tau}$ towards $U=(E,H)$ in accordance with Remark~\ref{rem:conv}.
    
    \section*{Acknowledgements}
    
    The authors wish to thank Shane Cooper for useful discussion on the subject and in particular on the discrete version of the Gelfand transformation presented here.

  \bibliographystyle{plain}
  
\end{document}